\documentclass[colorlinks]{article}
\pdfoutput=1
\usepackage{graphicx}


\usepackage[utf8]{inputenc}
\usepackage[english]{babel}


\usepackage{amsmath}
\usepackage{amsfonts}
\usepackage{amssymb}
\usepackage{amsthm}

\usepackage[all]{xy}

\usepackage[inline]{enumitem}
\setlist{itemsep=1pt,topsep=3pt,partopsep=0pt,leftmargin=20pt}
\SetEnumitemKey{(1)}{label=\rm{(\arabic*)}}
\SetEnumitemKey{(i)}{label=\rm{(\roman*)}}
\SetEnumitemKey{(I)}{label=\rm{(\Roman*)}}
\SetEnumitemKey{(a)}{label=\rm{(\alph*)}}
\SetEnumitemKey{(A)}{label=\rm{(\Alph*)}}

\usepackage[protrusion=true,expansion=true]{microtype}

\usepackage[
            colorlinks=true,
            linkcolor=blue]{hyperref}

\usepackage[capitalise]{cleveref}

\usepackage[backend = bibtex,
            language = english ,
            style    = numeric ,
            firstinits = true,
            isbn = false,
            url = false,
            doi = false,
            sorting = nyt,
            ]{biblatex}
\DeclareNameAlias{sortname}{last-first}
\renewbibmacro{in:}{%
  \ifentrytype{article}{}{%
  \printtext{\bibstring{in}\intitlepunct}}}
\bibliography{Mi_biblioteca}
\AtEveryBibitem{\clearlist{language}} 
\AtEveryBibitem{\clearfield{note}}    


\newtheorem{thm}{Theorem}
\newtheorem*{thm*}{Theorem}
\newtheorem{cor}[thm]{Corollary}
\newtheorem{lem}[thm]{Lemma}
\newtheorem*{lem*}{Lemma}

\theoremstyle{definition}

\theoremstyle{remark}


\newcommand{\defin}[1]{\emph{#1}}

\newcommand{\comment}[1]{{}}






\newcommand\NN{\mathbb{N}}
\newcommand\ZZ{\mathbb{Z}}



\newcommand{\Imp}{\ \Rightarrow \ }              
\newcommand{\Biimp}{\ \Leftrightarrow \ }           

\newcommand{\st}{\ \colon \,}                       


\newcommand{\Edges}{E}
\newcommand{\graphstyle}[1]{\mathsf{#1}}


\newcommand{\Graphi}{\graphstyle{\Gamma}}          
\newcommand{\Graphii}{\graphstyle{\Lambda}}



\newcommand{\pathi}{\graphstyle{P}}
\newcommand{\cyclei}{\graphstyle{C}}




\newcommand{\Groupi}{G}          
\newcommand{\groupi}{g}

\newcommand{\Geni}{X}
\newcommand{\geni}{x}
\newcommand{\Genii}{Y}
\newcommand{\genii}{y}
\newcommand{\Reli}{R}


\newcommand{\pres}[2]{\langle \, #1 \! \mid \! #2 \, \rangle}

\newcommand{\Pcg}[1]{\left\langle \, #1 \, \right\rangle}    


\newcommand{\End}{\operatorname{End}}

\newcommand{\Fix}{\operatorname{Fix}}
\newcommand{\Per}{\operatorname{Per}}

\newcommand{\trivial}{1}     
\newcommand{\morphi}{\varphi}



\title{Some characterizations of Howson PC-groups}


\author{
\textbf{Jordi Delgado}\\[3pt]
Dept. Mat. Apl. III,\\
Universitat Polit\`ecnica de Catalunya,\\
Barcelona\\
\texttt{jorge.delgado@upc.edu}
}


\parskip 3pt 

\begin{document}



\maketitle

{\small
\noindent \textbf{Keywords}: Right-angled Artin groups, partially commutative groups, graph groups, PC-groups, limit groups, Howson property.

\noindent \textbf{MSC2010}: 20Exx
}

\begin{abstract}
We show that in the class of partially commutative groups, the conditions of
being Howson, being fully residually free, and being free product of free-abelian groups, are equivalent.
\end{abstract}

\bigskip

In \cite{rodaro_fixed_2013}, the authors study the family of finitely generated partially commutative groups for which
the fixed points subgroup of every endomorphism is finitely generated. Concretely,
they characterize this family as those groups consisting in (finite) free products of finitely generated free-abelian groups.

In this note we provide an elementary proof for two extra characterizations of this family, namely: being Howson,
and  being a limit group.
Moreover, we observe that, for some of the properties, no restriction in the cardinal of the generating set is needed, and the result holds in full generality (i.e.~for every --- possibly infinitely generated --- partially commutative group).

%

\medskip


\section{Preliminaries}

We call \defin{partially commutative groups} (\defin{PC-groups}, for short)
the groups that admit a presentation in which all the relations are commutators between generators, i.e.~a
presentation of the form
$
\pres{\Geni}{\Reli}
$,
where $\Reli$ is a subset of $[\Geni,\Geni]$ (the \emph{set} of commutators between elements in~$\Geni$).
\goodbreak

We can represent this situation in a very natural way through the
(simple) graph~$\Graphi = (\Geni,\Edges)$ having
as vertices the generators in $\Geni$, and two vertices $\geni, \genii \in \Geni$ being adjacent if and only if its
commutator $[\geni, \genii]$ belongs to $\Reli$; then we say that the PC-group is \defin{presented} by the graph $\Graphi$, and we denote it by $\Pcg{\Graphi}$.
Recall that a \defin{simple graph} is undirected, loopless, and without multiple edges; so,~$\Graphi$ is nothing more than a symmetric and irreflexive binary relation in $X$.

A subgraph of a graph $\Graphi = (\Geni,\Edges)$ is said to be \defin{full} if it has exactly the edges that appear in~$\Graphi$ over the same vertex set, say $\Genii \subseteq \Geni$.
Then, it is called the \defin{full subgraph of $\Graphi$ spanned by $\Genii$}, and we denote it by~$\Graphi[Y]$.
If $\Graphi$ has a full subgraph isomorphic to a certain graph~$\Graphii$, we will abuse the
terminology and say that $\Graphii$ is (or appears as) a full subgraph of $\Graphi$; we denote this situation by~$\Graphii \leqslant \Graphi$. When
none of the graphs belonging to a certain family $\mathcal{F}$ appear as a full subgraph of $\Graphi$, we say that $\Graphi$ is \defin{$\mathcal{F}$-free}. In particular, a graph $\Graphi$ is \defin{$\Graphii$-free} if it does not have any full subgraph isomorphic to $\Graphii$.

It is clear that every graph $\Graphi$ presents exactly
one PC-group; that is, we have a surjective map~$\Graphi \mapsto \Pcg{\Graphi}$
between (isomorphic classes of) simple graphs and (isomorphic classes of) PC-groups.
A key result proved by Droms in \cite{droms_isomorphisms_1987} states that
this map
is, in fact, bijective.
Therefore, we have an absolutely transparent geometric characterization of isomorphic classes of
PC-groups: we can identify them with simple graphs.


This way, the PC-group corresponding to a graph with no edges is a free group, and the one corresponding to a complete graph is a free-abelian group (in both cases, with rank equal to the number of vertices). So, we can think of PC-groups as a generalization of these two extreme cases including all the intermediate commutativity situations between them.

Similarly, disjoint unions and \defin{joins of graphs}
(i.e.~disjoint unions with all possible edges between distinct constituents\comment{$\Graphi\!_i$'s} added)
correspond to free products and weak direct products of PC-groups respectively. So, for example, the finitely generated free-abelian times free group $\ZZ^m \times F_n$ is presented by the join of a complete graph of order $m$ and an edgeless graph of order~$n$.

All these facts are direct from definitions, and make the equivalence between the conditions in the following lemma almost immediate as well.

\begin{lem} \label{lem:graph equivalence}
Let $\Graphi$ be an arbitrary simple graph, and $\Pcg{\Graphi}$ the corresponding
PC-group. Then, the following conditions are equivalent:
\begin{enumerate}[(i)]
\item\label{ite:P3-free} the path on three vertices $\pathi_{3}$ is not a full subgraph of $\Graphi$ (i.e.~$\Graphi$ is $\pathi_{3}$-free),
\item\label{item:transitive} the reflexive closure of $\Graphi $ is a transitive binary relation,
\item\label{item:sqcup of complete} $\Graphi$ is a disjoint union of complete graphs,
\item\label{item:free product of free-abelian} $\Pcg{\Graphi}$ is a free product of free-abelian groups. \qed
\end{enumerate}
\end{lem}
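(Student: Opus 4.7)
The plan is to establish the cycle of implications $(\mathrm{i}) \Rightarrow (\mathrm{ii}) \Rightarrow (\mathrm{iii}) \Rightarrow (\mathrm{iv}) \Rightarrow (\mathrm{i})$, with the only non-trivial ingredient being Droms's theorem for the final step. Most of the work amounts to unwinding definitions.

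For $(\mathrm{i}) \Leftrightarrow (\mathrm{ii})$, I would argue by contrapositive in both directions. The reflexive closure of $\Graphi$ is automatically symmetric and reflexive, so failure of transitivity means that there exist distinct vertices $\geni, \genii, z \in \Geni$ with $\{\geni,\genii\}, \{\genii,z\} \in \Edges$ but $\{\geni,z\} \notin \Edges$; this is exactly the presence of $\pathi_{3}$ as a full subgraph of $\Graphi$.

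For $(\mathrm{ii}) \Leftrightarrow (\mathrm{iii})$, condition $(\mathrm{ii})$ says that the reflexive closure of $\Graphi$ is an equivalence relation on $\Geni$. Let $\{\Geni_i\}_{i \in I}$ be its equivalence classes: any two distinct elements in the same class $\Geni_i$ are adjacent in $\Graphi$, so $\Graphi[\Geni_i]$ is a complete graph; and no edge goes between different classes, so $\Graphi = \bigsqcup_i \Graphi[\Geni_i]$ is a disjoint union of complete graphs. The converse is immediate from the fact that ``being in the same complete component'' is trivially an equivalence relation whose reflexive closure coincides with adjacency-or-equality in $\Graphi$.

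For $(\mathrm{iii}) \Rightarrow (\mathrm{iv})$, this is a direct application of the two correspondences recorded in the preamble (disjoint unions of graphs $\leftrightarrow$ free products of PC-groups, and complete graphs $\leftrightarrow$ free-abelian groups).

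For $(\mathrm{iv}) \Rightarrow (\mathrm{iii})$, this is the only place where we need more than a definition chase. If $\Pcg{\Graphi}$ is a free product of free-abelian groups, then by the same two correspondences it is \emph{also} presented by some graph $\Graphii$ which is a disjoint union of complete graphs. Droms's theorem from \cite{droms_isomorphisms_1987} says the assignment $\Graphi \mapsto \Pcg{\Graphi}$ is bijective on isomorphism classes, so $\Graphi \cong \Graphii$, and $\Graphi$ itself is such a disjoint union.

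The main (and really the only) obstacle is the last step, $(\mathrm{iv}) \Rightarrow (\mathrm{iii})$, which rests on Droms's isomorphism rigidity for PC-groups; the remaining implications are straightforward reformulations.
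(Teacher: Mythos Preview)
Your argument is correct and aligns with the paper's treatment: the paper gives no proof at all (the \qed\ is in the statement), declaring the equivalences ``almost immediate'' from the graph/group correspondences and Droms's theorem stated just before, and your write-up simply makes those steps explicit. One cosmetic point: you announce the cycle $(\mathrm{i})\Rightarrow(\mathrm{ii})\Rightarrow(\mathrm{iii})\Rightarrow(\mathrm{iv})\Rightarrow(\mathrm{i})$ but then actually close the loop via $(\mathrm{iv})\Rightarrow(\mathrm{iii})$ rather than $(\mathrm{iv})\Rightarrow(\mathrm{i})$; this is harmless since $(\mathrm{i})\Leftrightarrow(\mathrm{iii})$ is already in hand, but you should adjust the opening sentence to match.
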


\goodbreak

The next lemma, for which we provide an elementary proof, is also well known. We will use it in the proof of \cref{thm:characterization}.
\begin{lem} \label{lem:induced PC-group}
Let $\Graphi$ be an arbitrary simple graph, and $Y$ a subset of vertices of $\Graphi$. Then, the subgroup of $\Pcg{\Graphi}$ generated by $Y$ is
isomorphic to the PC-group presented by
 $\Graphi[Y]$.
\end{lem}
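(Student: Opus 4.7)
The plan is to identify the subgroup $\Gen{Y} \leqslant \Pcg{\Graphi}$ with the image of the natural homomorphism $\iota: \Pcg{\Graphi[Y]} \to \Pcg{\Graphi}$ induced by the inclusion $Y \hookrightarrow \Geni$ on generators, and then show that $\iota$ is injective. The image equals $\Gen{Y}$ by construction, so everything comes down to injectivity.

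First I would check that $\iota$ is well-defined. The defining relations of $\Pcg{\Graphi[Y]}$ are the commutators $[y, y']$ with $\{y, y'\}$ an edge of $\Graphi[Y]$; because $\Graphi[Y]$ is \emph{full} in $\Graphi$, every such edge is already an edge of $\Graphi$, hence each such relation already holds in $\Pcg{\Graphi}$. Thus $\iota$ extends to a homomorphism on the whole group.

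To establish injectivity, I would construct a retraction $\pi : \Pcg{\Graphi} \to \Pcg{\Graphi[Y]}$ by setting $\pi(y) = y$ for $y \in Y$ and $\pi(x) = \trivial$ for $x \in \Geni \setminus Y$, and extending to a homomorphism. To see that $\pi$ is well-defined, I would verify that every defining relation $[a, b]$ of $\Pcg{\Graphi}$ (where $\{a, b\}$ is an edge of $\Graphi$) is killed: if both $a, b \in Y$, then by fullness of $\Graphi[Y]$ the edge $\{a, b\}$ lies in $\Graphi[Y]$ as well, so $[\pi(a), \pi(b)] = [a,b] = \trivial$ in $\Pcg{\Graphi[Y]}$; otherwise at least one of $\pi(a), \pi(b)$ is already trivial, and the commutator vanishes automatically. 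Since $\pi \circ \iota$ is the identity on the generating set $Y$, it is the identity on $\Pcg{\Graphi[Y]}$, and hence $\iota$ is injective.

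There is no real obstacle here; the only subtle point is the use of fullness of $\Graphi[Y]$, which is precisely what guarantees that no commutation relation present in the ambient group $\Pcg{\Graphi}$ between vertices of $Y$ is missing from $\Pcg{\Graphi[Y]}$, and is therefore what makes the retraction $\pi$ well-defined.
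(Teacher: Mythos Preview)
Your argument is correct and is essentially identical to the paper's own proof: the paper defines the same two homomorphisms (called $\alpha$ and $\rho$ there, your $\iota$ and $\pi$) and uses the retraction $\rho\circ\alpha=\id$ to conclude that $\alpha$ is injective with image $\Gen{Y}$. The only minor remark is that the well-definedness of $\iota$ needs merely that $\Graphi[Y]$ is a subgraph of $\Graphi$, not that it is full; fullness is used (as you correctly do) only for the retraction $\pi$.
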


\begin{proof}
Let $\Geni$ be the set of vertices of $\Graphi$
(then $\Genii \subseteq \Geni$),
and consider the following two homomorphisms:
\begin{equation*}
\begin{array}{rcccc}
\Pcg{\Graphi[Y]} & \overset{\alpha}\longrightarrow &\Pcg{\Graphi}\\
\genii & \longmapsto     & \ \ \genii  \\
\phantom{x}
\end{array}
\
\begin{array}{c}
\text{,}\\
\phantom{y}\\
\phantom{x}
\end{array}
\
\begin{array}{rcccc}
\Pcg{\Graphi} & \overset{\rho}{\longrightarrow} &\Pcg{\Graphi[Y]} \\
\Genii \ni \genii         & \longmapsto     & \genii \\
\Geni \setminus \Genii \ni \geni         & \longmapsto     & \trivial
\end{array}
\hspace{-7pt}
\begin{array}{c}
\text{.}\\
\phantom{y}\\
\phantom{x}
\end{array}
\end{equation*}
It is clear that both $\alpha$ and $\rho$ are well defined homomorphisms (they obviously respect relations).
Moreover, note that the composition $\alpha \rho$ ($\alpha$ followed by $\rho$) is the identity map on $\Pcg{\Graphi[Y]}$. Therefore, $\alpha$ is a monomorphism, and thus $\Pcg{\Graphi[Y]}$ is isomorphic to its image under $\alpha$, which is exactly the subgroup of $\Pcg{\Graphi}$ generated by $Y$, as we wanted to prove.
\end{proof}

\medskip

A group
is said to satisfy the \defin{Howson property} (or to be \defin{Howson}, for short) if
the intersection of any two finitely generated subgroups is again finitely generated.
It is well known that free and free-abelian groups are Howson (see, for example,~\cite{bogopolski_introduction_2008}~and~\cite{hungerford_algebra_1974} respectively).

However, not every PC-group is Howson: for example, the class of free-abelian times free groups
(studied in~\cite{delgado_algorithmic_2013})
turns out to be not Howson in every non-degenerate case (i.e.~they are Howson if and only if they do not have $\ZZ \times F_2$ as a subgroup). So, it is a natural question to ask for a characterization of Howson PC-groups, and we will see in \cref{thm:characterization} that the very same condition (not containing $\ZZ \times F_2$ as a subgroup) works for a general PC-group.

For limit groups there are lots of different equivalent definitions. We shall use the one using fully residually freeness (see~\cite{wilton_introduction_2005} for details):~a group $\Groupi$ is \defin{fully residually free} if for every finite subset $S \subseteq \Groupi$ such that $1 \notin S$, there exist an  homomorphism $\morphi$ from $\Groupi$ to a free group such that $1 \notin \morphi (S)$.
Then, a \defin{limit group} is a finitely generated fully residually free group.
From this definition, it is not difficult to see that both free and free-abelian groups are fully residually free, and that subgroups and free products of fully residually free groups are again fully residually free.


\section{Characterizations
}

As proved by Rodaro, Silva, and Sykiotis (Theorem 3.1 in \cite{rodaro_fixed_2013}),
if we restrict to finitely generated PC-groups, \cref{lem:graph equivalence} describes exactly the family of those
having finitely generated fixed point subgroup for every endomorphism
(or equivalently, those
having finitely generated periodic point subgroup for every endomorphism).

In the following theorem, we provide two extra characterizations for the PC-groups described in \cref{lem:graph equivalence}  (including the infinitely generated case).
For completeness in the description, we summarize them in a single statement together with the conditions
discussed above.

\begin{thm} \label{thm:characterization}
Let $\Graphi$ be an arbitrary (possibly infinite) simple graph, and $\Pcg{\Graphi}$ the
PC-group presented by $\Graphi$. Then, the following conditions are equivalent:
\begin{enumerate}[(i)]
\item\label{item:fully residually free} $\Pcg{\Graphi}$ is fully residually free,
\item\label{item:Howson} $\Pcg{\Graphi}$ is Howson,
\item\label{item:F2 x Z not in G} $\Pcg{\Graphi}$ does not contain $\ZZ \times F_2$ as a subgroup,
\item\label{item:free product of free-abelian} $\Pcg{\Graphi}$ is a free product of free-abelian groups.
\end{enumerate}
Moreover, if $\Graphi$ is finite, then the following additional conditions are also equivalent:
\begin{enumerate}[resume,(i)]
\item\label{item:Fix fg} For every $\morphi \in \End{\Pcg{\Graphi}}$, the subgroup
\begin{equation*}
\Fix \morphi = \{ \groupi \in \Pcg{\Graphi} \st \morphi(\groupi) = \groupi \}
\end{equation*}
of fixed points of $\morphi$ is finitely generated.
\item\label{item:Per fg} For every $\morphi \in \End{\Pcg{\Graphi}}$, the subgroup
\begin{equation*}
\Per \morphi = \{ \groupi \in \Pcg{\Graphi} \st \exists n\geq 1 \ \morphi^{n}(\groupi) = \groupi \}
\end{equation*}
of periodic points of $\morphi$ is finitely generated.
\end{enumerate}
\end{thm}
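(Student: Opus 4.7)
The plan is to close the cycle
$\ref{item:free product of free-abelian} \Rightarrow \ref{item:fully residually free},\ref{item:Howson} \Rightarrow \ref{item:F2 x Z not in G} \Rightarrow \ref{item:free product of free-abelian}$, using the path $\pathi_3$ and its PC-group as the hinge. The crucial (and easy) observation is that $\Pcg{\pathi_3} \isom \ZZ \times F_2$: the middle vertex commutes with both endpoints and is therefore central, while the two endpoints do not commute and so freely generate an $F_2$. Combined with \cref{lem:graph equivalence,lem:induced PC-group}, this pins down $\ZZ \times F_2$ as the universal obstruction to $\Graphi$ being $\pathi_3$-free; in particular, the contrapositive of $\ref{item:F2 x Z not in G} \Rightarrow \ref{item:free product of free-abelian}$ is immediate, since a non-$\pathi_3$-free $\Graphi$ gives, via \cref{lem:induced PC-group}, an embedded copy of $\Pcg{\pathi_3} \isom \ZZ \times F_2$ inside $\Pcg{\Graphi}$.

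For $\ref{item:free product of free-abelian} \Rightarrow \ref{item:fully residually free}$ and $\ref{item:free product of free-abelian} \Rightarrow \ref{item:Howson}$, I would combine three ingredients: that free-abelian groups are both fully residually free and Howson (already recalled in the introduction); that both properties are preserved under finite free products (the Howson half being Baumslag's theorem); and the trivial reduction that any finitely generated subgroup of an arbitrary free product $\Pcg{\Graphi} = \ast_i A_i$ is contained in $\ast_{i\in F} A_i$ for some finite $F$, which is what allows us to drop the cardinality restriction on the generating set of $\Pcg{\Graphi}$.

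The remaining implications $\ref{item:fully residually free} \Rightarrow \ref{item:F2 x Z not in G}$ and $\ref{item:Howson} \Rightarrow \ref{item:F2 x Z not in G}$ I would argue by contrapositive and heredity. Both Howson-ness and full residual freeness pass to arbitrary subgroups (trivially from the definitions, since every f.g.\ subgroup of a subgroup is a f.g.\ subgroup of the ambient group, and every finite subset of a subgroup is a finite subset of the ambient group). So it suffices to show that $\ZZ \times F_2$ is neither Howson nor fully residually free. The first is classical and already alluded to in the introduction. For the second, write $\ZZ \times F_2 = \gen{b} \times \gen{a,c}$ and consider $S = \{b, [a,c]\} \not\ni \trivial$: any homomorphism $\morphi$ from $\ZZ \times F_2$ to a free group avoiding $\trivial$ on~$S$ would force $\morphi(a)$ and $\morphi(c)$ to centralize the non-trivial element $\morphi(b)$ in a free group, hence to lie in a common cyclic subgroup, contradicting $\morphi([a,c]) \neq \trivial$. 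Finally, the equivalence with \ref{item:Fix fg} and \ref{item:Per fg} for finite $\Graphi$ is the content of the already cited Rodaro--Silva--Sykiotis theorem. The main obstacle is locating/citing the free-product half of \ref{item:free product of free-abelian} $\Rightarrow$ \ref{item:Howson} (Baumslag's theorem); everything else is elementary graph-theoretic bookkeeping together with the centralizer argument above.
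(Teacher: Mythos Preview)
Your argument is correct, and the shared steps — $\ref{item:F2 x Z not in G}\Rightarrow\ref{item:free product of free-abelian}$ via \cref{lem:induced PC-group} and \cref{lem:graph equivalence}, $\ref{item:free product of free-abelian}\Rightarrow\ref{item:fully residually free}$ via closure under free products, and the Rodaro--Silva--Sykiotis citation for \ref{item:Fix fg},\ref{item:Per fg} — match the paper's proof. The genuine divergence is in how \ref{item:Howson} is tied into the cycle. The paper runs the single loop $\ref{item:fully residually free}\Rightarrow\ref{item:Howson}\Rightarrow\ref{item:F2 x Z not in G}\Rightarrow\ref{item:free product of free-abelian}\Rightarrow\ref{item:fully residually free}$, invoking Dahmani's theorem (Howson-ness of groups hyperbolic relative to abelian subgroups) for $\ref{item:fully residually free}\Rightarrow\ref{item:Howson}$; it never needs to argue that $\ZZ\times F_2$ fails to be fully residually free. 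You instead branch from \ref{item:free product of free-abelian} to both \ref{item:fully residually free} and \ref{item:Howson}, obtaining $\ref{item:free product of free-abelian}\Rightarrow\ref{item:Howson}$ from Baumslag's free-product theorem and closing $\ref{item:fully residually free}\Rightarrow\ref{item:F2 x Z not in G}$ with the elementary centralizer argument on $S=\{b,[a,c]\}$. Your route is more self-contained — Baumslag's 1966 result is classical combinatorial group theory, whereas Dahmani's result sits atop the machinery of relative hyperbolicity — but the paper's route yields, as a byproduct, the stronger external fact that \emph{every} fully residually free group is Howson, not just those of the form $\Pcg{\Graphi}$. Your worry about locating Baumslag's theorem is well placed: it is exactly the one non-elementary input your version needs, and without it the implication $\ref{item:free product of free-abelian}\Rightarrow\ref{item:Howson}$ would be a gap.
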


\begin{proof}
$[\ref{item:fully residually free}\,\Rightarrow \, \ref{item:Howson}]$. Dahmani obtained this result for limit groups
(i.e.~assuming~$\Pcg{\Graphi}$~finitely generated)
as a consequence of them being hyperbolic relative to their maximal abelian non-cyclic subgroups (see Corollary~0.4 in~\cite{dahmani_combination_2003}). We note that the finitely generated condition is superfluous for this implication since the Howson property involves only finitely generated subgroups, and every subgroup of a fully residually free group is again fully residually free.

$[\ref{item:Howson}\, \Rightarrow \, \ref{item:F2 x Z not in G}]$. It is enough to prove that the group $\ZZ \times F_2$ does not satisfy the Howson
property. The following argument is described as a solution to exercise 23.8(3) in~\cite{bogopolski_introduction_2008} (see also \cite{delgado_algorithmic_2013}). Indeed, if we write $\ZZ \times F_2 = \pres{t}{-} \times \pres{a,b}{-} $, then the subgroups
 \begin{equation*}
 \begin{array}{l}
H=\langle a, b\rangle = F_2  \leqslant \ZZ \times F_2 \text{, and}\\[2pt]
K=\langle ta,b\rangle =\{ w(ta,b) \mid w\in F_2 \}=\{ t^{|w|_a}w(a,b) \mid w\in F_2 \}\leqslant \ZZ \times F_2
\end{array}
 \end{equation*}
are both finitely generated, but its intersection
 \begin{equation*}
H\cap K =\{ t^0 w(a,b) \mid w\in F_2,\, |w|_a=0 \} =
\langle\hspace{-2.5pt}\langle b\rangle\hspace{-2.5pt}\rangle_{F_2} =\langle a^{-k}ba^k,\, k\in \mathbb{Z}\rangle
 \end{equation*}
is infinitely generated, as you can see immediately from its Stallings graph~(see~\cite{stallings_topology_1983} and~\cite{kapovich_stallings_2002})
\vspace{-5pt}
\begin{equation*}
\xymatrix
@1
{
\raisebox{2pt}{\ldots} \ar@[red][r]_{a} & \bullet \ar@[blue]@(ru,lu)[]_{b} \ar@[red][r]_{a} & \bullet \ar@[blue]@(ru,lu)[]_{b} \ar@[red][r]_{a} & \bullet
\ar@[blue]@(ru,lu)[]_{b} \ar@[red][r]_{a} &
\odot \ar@[blue]@(ru,lu)[]_{b} \ar@[red][r]_{a} & \bullet \ar@[blue]@(ru,lu)[]_{b} \ar@[red][r]_{a} & \bullet \ar@[blue]@(ru,lu)[]_{b} \ar@[red][r]_{a} & \bullet
\ar@[blue]@(ru,lu)[]_{b} \ar@[red][r]_{a} & \, \raisebox{2pt}{\ldots \ ,}
}
\end{equation*}
or using this alternative argument: Suppose that $H \cap K$ is finitely generated, then there exist an $m \in \NN$ such that $a^{m+1} b a^{-(m+1)} \in \langle a^{-k}ba^k,\, k\in [-m,m]\rangle$, and thus $a^{m+1}$ equals the reduced form of some prefix of $w(a^m b a^{-m},\ldots,b,\ldots,a^{-m} b a^{m})$, for some word $w$. However, the sum of the exponents of $a$ in any such prefix must be in $[-m,m]$, which is a contradiction.

Note that both $H$
and $K$ are free groups of rank two whose intersection is infinitely generated. This fact, far from violating the Howson property, means that they are not simultaneously contained in any free subgroup of $\ZZ \times F_2$.

$[\ref{item:F2 x Z not in G}\,\Rightarrow \, \ref{item:free product of free-abelian}]$.
From \cref{lem:induced PC-group},
if $\Pcg{\Graphi}$ does not contain the group $\ZZ \times F_2$ (which is presented by $\pathi_3$) as a subgroup, then $\pathi_3$ is not a full subgraph of $\Graphi$. Equivalently (see \cref{lem:graph equivalence}), $\Pcg{\Graphi}$ is a free product of free-abelian groups.

$[\ref{item:free product of free-abelian} \,\Rightarrow \, \ref{item:fully residually free}]$. This is again clear, since both free-abelian groups and free products of fully residually free groups are again fully residually free. Note here, that no cardinal restriction is needed; neither for the rank of the free-abelian groups, nor for the number of factors in the free product, since the definition of fully residually freeness involves only finite families.

Finally, for the equivalence between \ref{item:free product of free-abelian}, \ref{item:Fix fg} and \ref{item:Per fg} under the finite generation hypothesis, see Theorem 3.1 in \cite{rodaro_fixed_2013}.
\end{proof}

Observe that an immediate corollary of \cref{lem:induced PC-group} is that
the PC-group presented by any full subgraph $\Graphii \leqslant \Graphi$ is itself a subgroup of the PC-group presented by $\Graphi$, i.e.~for every pair of graphs $\Graphi,\Graphii$,
\begin{equation} \label{eq:full subgraph -> subgroup}
\Graphii \leqslant \Graphi \Imp \Pcg{\Graphii} \leqslant \Pcg{\Graphi}.
\end{equation}
This property provides a distinguished family of subgroups
(which we will call visible)
of any given PC-group.
More precisely, we will say that a PC-group $\Pcg{\Graphii}$ is a \defin{visible subgroup of} a PC-group~$\Pcg{\Graphi}$ --- or that $\Pcg{\Graphii}$ is \defin{visible in}  $\Pcg{\Graphi}$ --- if~$\Graphii$~appears as a full subgraph of~$\Graphi$.


Of course, visible subgroups
are PC-groups as well,  but
not every partially commutative subgroup of a PC-group is visible
(for example, $F_3$ is obviously not visible in~$F_2$).


Note that although ``visibility'' is a relative property (a PC-group can be visible in a certain group, but not in another one), there exist PC-groups
which are visible in every PC-group in which they appear as a subgroup; we will call them explicit.
That is, a given PC-group $\Pcg{\Graphii}$ (or the graph $\Graphii$ presenting it) is \defin{explicit}\ if for every graph $\Graphi$, 
\begin{equation} \label{eq:full subgraph <-> subgroup}
\Graphii \leqslant \Graphi \Biimp \Pcg{\Graphii} \leqslant \Pcg{\Graphi}.
\end{equation}


For example, it is straightforward to see that the only explicit edgeless graphs are the ones with zero, one, and two vertices: the first two cases are obvious, and for the third one, note that if $F_2 \leqslant \Groupi$ then $\Groupi$ can not be abelian. Finally, for $n\geq3$, it is sufficient to note (again) that $F_n$ is not a visible subgroup of~$F_2$.

At the opposite extreme, a well-known result (Lemma 18 in \cite{kim_embedability_2013}) states that the maximum rank of a free-abelian subgroup of a f.g.~PC-group
$\Pcg{\Graphi}$
is the size of a largest complete subgraph in $\Graphi$. An immediate corollary is that every (finite) complete graph is explicit.

In the last years,
embedability between PC-groups has been matter of growing interest and research (see \cite{kambites_commuting_2009}, \cite{kim_embedability_2013} and \cite{casals-ruiz_embedddings_2013}) which has provided some new examples of explicit graphs, such as the square $\cyclei_4$ (proved by Kambites, in \cite{kambites_commuting_2009}), or the path on four vertices $\pathi_4$ (proved by Kim and Koberda, in \cite{kim_embedability_2013}).

To end with, we just remark that our characterization theorem (\cref{thm:characterization}) immediately provides a new member of this family.

\begin{cor}
The path on three vertices $\pathi_3$ is explicit. \qed
\end{cor}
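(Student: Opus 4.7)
The plan is to establish both directions of the biconditional in the definition of explicit (equation~\eqref{eq:full subgraph <-> subgroup}) for $\Graphii = \pathi_3$. Labelling the middle vertex of $\pathi_3$ as $b$ and the endpoints as $a,c$, the defining relations $[a,b]=[b,c]=1$ (together with the absence of any relation between $a$ and $c$) give the splitting $\Pcg{\pathi_3} = \gen{b} \times \gen{a,c} \cong \ZZ \times F_2$. So the statement to be proved amounts to: for every graph $\Graphi$, one has $\pathi_3 \leqslant \Graphi$ if and only if $\ZZ \times F_2 \leqslant \Pcg{\Graphi}$.

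The forward implication is just the special case $\Graphii = \pathi_3$ of~\eqref{eq:full subgraph -> subgroup}, which was an immediate consequence of \cref{lem:induced PC-group}. For the converse, the plan is simply to chain together \cref{thm:characterization} and \cref{lem:graph equivalence}. Assume $\ZZ\times F_2 \leqslant \Pcg{\Graphi}$. Then \cref{thm:characterization}, specifically the equivalence between conditions~\ref{item:F2 x Z not in G} and~\ref{item:free product of free-abelian}, tells us that $\Pcg{\Graphi}$ cannot be a free product of free-abelian groups; and the equivalence between conditions~\ref{ite:P3-free} and~\ref{item:free product of free-abelian} in \cref{lem:graph equivalence} then forces $\Graphi$ to fail being $\pathi_3$-free, i.e.\ $\pathi_3 \leqslant \Graphi$, as wanted.

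There is no real obstacle here: the whole substance of the argument was already absorbed into the step $[\ref{item:F2 x Z not in G}\Rightarrow\ref{item:free product of free-abelian}]$ of the proof of \cref{thm:characterization}, which in turn relies on \cref{lem:induced PC-group} to pass from a $\pathi_3$ full subgraph to the embedded subgroup $\ZZ\times F_2$. The corollary is just the explicit reformulation of this fact in the language of visible subgroups, so nothing further is needed beyond the bookkeeping above.
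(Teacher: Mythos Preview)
Your argument is correct and matches the paper's intent: the corollary is stated there with a bare \qed\ and the remark that it follows immediately from \cref{thm:characterization}, and you have simply unpacked that claim by invoking the equivalence \ref{item:F2 x Z not in G}\,$\Leftrightarrow$\,\ref{item:free product of free-abelian} from the theorem together with \cref{lem:graph equivalence}. There is nothing to add or correct.
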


\renewcommand{\bibfont}{\normalfont\small}
\bibitemsep = 1ex
\printbibliography

\subsection*{Acknowledgement}
The author gratefully acknowledges the support of \emph{Universitat Polit\`{e}cnica de Catalunya} through PhD grant number 81--727, and the partial support from the MEC (Spanish Government) through research project number MTM2011-25955. I would also like to express my gratitude to Enric Ventura for his constant support, and his insightful comments and suggestions.

%
%
%
%
%

\end{document}